\newtheorem{Theorem}{Theorem}[section]
\newtheorem{corollary}[Theorem]{Corollary}
\newtheorem{lemma}[Theorem]{Lemma}
\theoremstyle{definition}
\newtheorem{Remark}[Theorem]{Remark}
\newtheorem{definition}[Theorem]{Definition}
\title[Generating series of cyclically fully commutative elements]{Generating series of cyclically fully commutative elements is rational}
\author[Mathias P\'etr\'eolle]{Mathias P\'etr\'eolle}
\address{Institut Camille Jordan, Universit\'e Claude Bernard Lyon 1,
69622 Villeurbanne Cedex, France}
\email{petreolle@math.univ-lyon1.fr}
\urladdr{http://math.univ-lyon1.fr/{\textasciitilde}petreolle}
\keywords{automaton, cyclically fully commutative elements, Coxeter groups, rationality}
\begin{document}
\maketitle

\begin{abstract}
In this paper,  we study the generating function of cyclically fully commutative elements in Coxeter groups, which are elements such that any cyclic shift of theirs reduced decompositions remains a reduced expression of a fully commutative element. By designing a finite state automaton recognizing reduced expressions of cyclically fully commutative elements, we can show that the aforementioned generating series is always rational.

\end{abstract}

\section{Introduction}

Recall that a Coxeter group $W$ is defined by a finite set $S$ of generators which are all involutions and which are subject to \emph{braid relations} of the form $st\cdots=ts\cdots$ where both sides of the equality have the same length, namely $m_{st} \geq 2$. Combinatorics of Coxeter is a very central and studied question, related to various domains in mathematics, such as Temperley--Lieb algebras, representation theory, or geometric topics (see for instance \cite{BB, HUM}).  This combinatorics arises in particular from the properties of \emph{reduced expressions} of an element $w \in W$, which are words $s_1 \ldots s_\ell$ in $S^*$, the free monoid generated by $S$, representing $w$ such that $w$ can not be written as a product with less than $\ell$ generators. 

 Note that the Matsumoto's property ensures us that for a generic element $w \in W$, any two of its reduced decompositions are linked by a series of braid relations. An element of $W$ is said to be \emph{fully commutative} (FC) if any two of its reduced decompositions can be linked by a series of commutation relations of adjacent letters. These elements were introduced and studied independently by Fan \cite{FAN}, Graham \cite{GRA} and Stembridge \cite{STEM1, STEM2, STEM3}. In particular, Stembridge classified the Coxeter groups with a finite number of fully commutative elements and enumerated them in each case. Computing this enumeration is a relevant question, as these elements index a linear basis of (generalized) Temperley--Lieb algebras. More recently, Biagioli--Jouhet--Nadeau \cite{BJN2} extend these results, and enumerated these elements in all finite and affine Coxter groups according to their lengths. Nadeau also proved in \cite{NAD} that the generating function of fully commutative elements is rational in all Coxeter groups.
 
 In this paper, we will focus on a certain subset of fully commutative elements, the \emph{cyclically fully commutative} (CFC) elements. These are elements $w$ for which every cyclic shift of any reduced expression of $w$ is a reduced expression of a FC element (which can be different from $w$).  They were introduced by Boothby \emph{et al.} in \cite{BBEEGM}, where the authors classified the Coxeter groups with a finite number of CFC elements (they showed that they are exactly the groups with a finite number of FC elements) and enumerated them.  These elements were introduced as a class of elements generalizing the Coxeter elements, in that they share certain key combinatorial properties, and in order to study a cyclic version of Matsumoto's theorem.
 
The CFC elements was also enumerated for all finite and affine Coxeter groups by the author in \cite{PET4}, using a new construction, the cylindrical closure. It appears in this work that the generating function of CFC elements is in all affine and finite Coxeter groups a rational function from which the coefficients are ultimately periodic. This opens the question about the nature of this generating series for other Coxeter groups. This is the subject of this article, in which we give an affirmative answer for the rationality of the generating function. We write $W^{CFC}_k$ the set of all CFC elements of length $k$, and we fix $W^{CFC}(x):= \sum_{k \geq 0} |W^{CFC}_k|x^k$. Our main contribution is the following theorem.

\begin{Theorem}\label{thmintro}
Let $(W,S)$ be a Coxeter system. The generating series $W^{CFC}(x)$ is rational.
\end{Theorem}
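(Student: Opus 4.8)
The plan is to realize $W^{CFC}(x)$ as the length generating series of a regular language, since such a series is always rational (by the theorem of Chomsky--Sch\"utzenberger, or equivalently by applying the transfer-matrix method to a finite automaton recognizing the language). Fix once and for all a linear order on $S$, and for $w\in W$ let $\nu(w)\in S^*$ be the lexicographically least reduced word representing $w$. By the Brink--Howlett theorem, $(W,S)$ admits an automatic structure; in particular the set $\mathcal N=\{\nu(w):w\in W\}$ of canonical reduced words is a regular language, as is the set of reduced words in $S^*$. Since $w\mapsto\nu(w)$ is a length-preserving bijection $W\to\mathcal N$, one has $W^{CFC}(x)=\sum_{u\in\mathcal N_{CFC}}x^{|u|}$, where $\mathcal N_{CFC}=\{u\in\mathcal N:[u]\text{ is CFC}\}$ and $[u]\in W$ denotes the element represented by $u$. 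It is therefore enough to prove that $\mathcal N_{CFC}$ is regular.

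The next step is a word-level reformulation of the CFC condition. By definition, $[u]$ is CFC exactly when every cyclic shift of every reduced expression of $[u]$ is a reduced expression of a fully commutative element; since these properties depend only on the commutation class of a reduced word -- equivalently, on the heap of $u$ and its cyclic counterpart, the cylindrical closure of \cite{PET4} -- they can be tested on the single word $u$ (compare \cite{BBEEGM}). Combined with Stembridge's criterion, that a reduced word represents a fully commutative element if and only if its heap contains no forbidden ``braid'' convex chain -- a configuration whose size is bounded in terms of $(W,S)$ only -- and with the fact that non-reducedness is detected by the Brink--Howlett automaton, this gives
\[
\mathcal N_{CFC}=\{\,u\in\mathcal N:\text{for every factorization $u=pq$, the word $qp$ is a reduced expression of an FC element}\,\}.
\]

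The heart of the proof is to recognize this language by a finite automaton. Reading $u=s_1s_2\cdots$ from left to right, the automaton keeps in its state: the Brink--Howlett state, monitoring reducedness of $u$ and, through the automaton for $\mathcal N$, canonicity; a bounded ``top summary'' of the heap of $u$ built so far, enough to decide whether appending the next generator preserves the absence of forbidden convex chains; a bounded ``bottom summary'' encoding the contribution to the heap of an initial segment of $u$; and a sliding window holding the last boundedly many letters read. The sliding window lets the automaton verify, on the fly, that rotating $u$ at each interior position yields a reduced word whose heap has no forbidden convex chain, and upon reaching the end of $u$ it checks the remaining finitely many rotations -- those whose seam lies near the beginning or the end of $u$ -- by gluing the stored top summary to the stored bottom summary. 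The automaton accepts $u$ precisely when all these checks succeed, so by the previous paragraph its language is $\mathcal N_{CFC}$.

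The step I expect to be the main obstacle is establishing that all the data above can be chosen of size bounded by a constant depending only on $(W,S)$, uniformly over the arbitrarily long words $u$; that is, that the conditions at play are genuinely local in the heap, even though the heap itself is unbounded. For the reducedness, canonicity and (non-cyclic) full commutativity of $u$, this is essentially Nadeau's analysis \cite{NAD} of the canonical reduced words of FC elements. The genuinely new point is a localization lemma for the cyclic conditions: if $u$ is a reduced FC word and some rotation $qp$ fails to be a reduced expression of an FC element, then the obstruction -- a cancellation witnessing non-reducedness, or a forbidden convex chain -- already lies in a band of bounded ``height'' of the heap straddling the seam, so that only bounded end-data of $u$ is needed to detect it. Proving this uniform bound is exactly where the combinatorial rigidity peculiar to CFC elements must be exploited, via the cylindrical closure of \cite{PET4} rather than merely the FC hypothesis, and it is the part I expect to cost the most effort. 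Granting the lemma, the automaton just described has finitely many states and recognizes precisely $\mathcal N_{CFC}$; hence $\mathcal N_{CFC}$ is regular and $W^{CFC}(x)=\sum_{u\in\mathcal N_{CFC}}x^{|u|}$ is a rational function. One could also argue less constructively: granting that $\{u\in S^*:[u]\text{ is FC}\}$ is regular, regularity of $\mathcal N_{CFC}$ would follow from Boolean operations together with the classical fact that the cyclic closure of a regular language is regular; but that input is of comparable difficulty, and the explicit automaton has the advantage of being effectively computable.
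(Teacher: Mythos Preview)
Your overall strategy coincides with the paper's: exhibit the reduced words of CFC elements as a regular language by an automaton that carries bounded ``initial'' and ``final'' data and checks a gluing condition at the end, then intersect with the Brink--Howlett language of canonical reduced words and invoke Theorem~\ref{kleene}. The paper's explicit automaton is precisely an implementation of your ``top summary'' and ``bottom summary'': for each non-commuting pair $(s,t)$ it stores an initial alternating chain $IC_{st}$ and a current (final) alternating chain $CC_{st}$, both of length $<m_{st}$, and a state is accepting when, for every such pair, the concatenation $CC_{st}\cdot IC_{st}$ (with the overlap removed via the underlining device) has length $<m_{st}$ and no repeated letter. So the ``localization lemma'' you flag as the main obstacle is exactly what the paper proves through Lemmas~\ref{lemme1} and~\ref{lemme2}.

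One point in your sketch is off. The sliding window meant to ``verify, on the fly, that rotating $u$ at each interior position yields a reduced FC word'' is unnecessary and, as described, does not do what you want. If $u=pq$ is already reduced and FC, then any braid or reduction in the rotation $qp$ must straddle the boundary between $q$ and $p$, and that boundary is the \emph{same} wrap-around seam (end of $u$ to start of $u$) for every choice of cut; nothing happens at ``interior'' positions that is not already detected by the FC check on $u$ itself. The paper accordingly performs only one gluing test, at acceptance time, using $IC_{st}$ and $CC_{st}$.

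Your closing remark is worth highlighting: once one grants that the reduced FC words form a regular language $L_{FC}$ (Nadeau), the reduced CFC words are exactly $S^*\setminus \operatorname{cyc}(S^*\setminus L_{FC})$, where $\operatorname{cyc}$ denotes cyclic closure. Since regular languages are closed under complement and under cyclic closure, this already shows regularity without building a new automaton, and then the Brink--Howlett intersection finishes the proof. This is a genuinely shorter route than the paper's; the paper's explicit construction, on the other hand, is self-contained (it re-derives the FC regularity as a by-product) and yields an automaton one can actually implement.
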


There are many results about the rationality of generating series for various subsets of $W$, including for example the length generating series of $W$ itself (see \cite[Section 7.1]{BB}), or the length generating series of  fully commutative elements (see \cite[Theorem 1.1]{NAD}).

Unlike in the case of $W$, there does not seem to be a (simple) recursive decomposition of $W^{CFC}$ which behaves well with respect to length, which will be sufficient to prove the rationality of $W^{CFC}(x)$. Indeed, an equivalent formulation of this rationality is that the sequence $|W^{CFC}_k|$ satisfies a linear recurrence with constant coefficients for $k$ large enough. That is why, to prove our Theorem~\ref{thmintro}, we will use an indirect approach through the theory of \emph{finite state automata}, like Nadeau did for fully commutative elements in \cite{NAD}. In Section~\ref{preuverecauto}, we build an automaton recognizing a language with generating function $W^{CFC}(x)$, which induces the rationality of $W^{CFC}(x)$. To do this, we begin by designing an explicit automaton recognizing the set of all reduced expressions of CFC elements in $W$. This design is essentially the core of this paper. Next, we go from reduced expressions to the elements by using a lexicographical order through a result of Brink and Howlett, which complete the proof of Theorem~\ref{thmintro}.

We next deduce some corollaries from this theorem, in particular we re-obtain a result due to Nadeau.

\medskip
 
This paper is organized as follows. We recall in Section~\ref{section2} some definitions. More precisely, we first recall some notions on Coxeter groups and secondly we present the basic notions about the theory of finite state automata. Section~\ref{preuverecauto} is devoted to the proof of Theorem~\ref{thmintro}.  In its first part, we design the aforementioned automaton recognizing reduced expressions of CFC elements, and prove its correctness, which is the crucial point of this paper. In Paragraph~\ref{3.2}, we achieve the proof of Theorem~\ref{thmintro}, and deduce some corollaries.  Section~\ref{section4} ends this paper with some open questions and perspectives.

\section{Cyclically fully commutative elements and automata theory}\label{section2}

\subsection{Coxeter groups} Here we recall some useful notions about Coxeter groups.

 Let $W$ be a Coxeter group with finite generating set $S$ and Coxeter matrix $M=(m_{st})_{s,t \in S}$. Recall (see \cite{BB}) that this notation means that the defining relations between generators are of the form $(st)^{m_{st}}=1$ for $m_{st} \neq \infty$, where the matrix $M$ is symmetric  with $m_{ss}= 1$ and  $m_{st} \in \{ 2, 3, \ldots\} \cup \{  \infty \}$. The pair ($W, S$) is called a \textit{Coxeter system}. We can write, for each pair $(s,t)$ of distinct generators, the relations as $sts\cdots =tst \cdots$, each side having length $m_{st}$; these are usually called \textit{braid relations} when $m_{st}\geq 3$. When $m_{st}=2$, this is a \textit{commutation relation} or a \textit{short braid relation}. We define the Coxeter diagram $\Gamma$ associated with ($W,S$) as the graph with vertex set $S$, and  edges labelled $m_{st}$ between $s$ and $t$ for each $m_{st} \geq 3$. As is customary, edge labels equal to $3$ are usually omitted.
 
 We define $S^*$ the free monoid generated by $S$, and we call the elements of $S^*$ \emph{expressions} or \emph{words} (we denote by $\varepsilon$ the empty word). Recall that by definition, each element of $W$ can be represented by a word $\bf w$ in  $S^*$, but this word is, in general, not unique. For clarity, in this paper, we always write $w$ for elements in $W$, and $\bf w$ for expressions in $S^*$. If a word $\bf w$ in $S^*$ is equal to $w$ when considered as an element of $W$, we say that $\bf w$ is an expression for $w$, or equivalently, that the corresponding element for $\bf w$ is $w$.  For $w \in W$, we define the \emph{Coxeter length $\ell(w)$} as the minimal length $k$ of any expression ${\bf w}=s_1\ldots s_k \in S^*$ such that ${\bf w}$ is an expression for $w$. An expression $\bf w$ of $w$ is \emph{reduced} if it has minimal length. The set of all reduced expressions of $w$ is denoted by $R(w)$.

A classical result in Coxeter groups theory, known as Matsumoto's theorem (see \cite[Theorem 3.3.1]{BB}) states that, given two words in $R(w)$, one can always go from one to the other by using only braid relations. An element $w \in W$ is \emph{fully commutative} if any expression in $R(w)$ can be obtained from any other one by using only commutation relations (we will often abbreviate the term fully commutative by FC). These elements were characterized by Stembridge \cite[Proposition 2.1]{STEM1} as elements $w$ such that no word in $R(w)$ contains a braid word. Equivalently, it means that a reduced expression $\bf w$ is an expression for a FC element if we can not go by using commutation relations from $ \bf w$ to an expression containing a braid word. We say that two words are \emph{commutation equivalent} if one can go from one to the other by using only commutation relations. 

Given a word ${\bf w }= s_1 \ldots s_n \in S^*$, the \emph{left cyclic shift} of $\bf w$ is $s_2s_3\ldots s_ns_1$. A \emph{cyclic shift} of $\bf w$ is $\bf w$ itself or $s_ks_{k+1}\ldots s_{k-1}$ for $k \in \{2, \ldots n\}$.

\begin{definition}An element $w\in W$ is \emph{cyclically fully commutative}  if every cyclic shift of any expression in $R(w)$ is a reduced expression for a fully commutative element (which can be different from $w$). 

\end{definition}

We will often write CFC for cyclically fully commutative in the rest of the paper. We denote by $W^{CFC}$ the set of CFC elements of $W$, and if $k$ is an integer, $W^{CFC}_k$ denotes the set of CFC elements of length $k$, and we fix \begin{equation}W^{CFC}(x):=\sum_{k \geq 0} |W^{CFC}_k| x^k = \sum_{w \in W} x^{\ell(w)},
\end{equation} 
the generating function of CFC elements according to their Coxeter length.

\subsection{Finite state automata}

We refer to \cite{HOP} for the general theory of finite state automata. Here, we only need the basic notions of this theory.

Let $S$ be a finite set, that we call the \emph{alphabet}, and, as above, let $S^*$ be the free monoid generated by $S$. A \emph{finite, deterministic, complete automaton} $\mathcal{A}$ on the alphabet $S$ is a 5-tuple $(S,Q, \delta, i_0, F)$ such that:
\begin{itemize}
\item $S$ is the alphabet, 
\item $Q$ is a finite set of \emph{states},
\item $\delta$ is a \emph{transition function} $\delta : Q \times S \rightarrow Q$,
\item $i_0$ is a state, called the \emph{initial state},
\item $F$ is a subset of $Q$, containing the \emph{final states}.
\end{itemize}
The function $\delta$ can be extended to a function $\bar{\delta} : Q \times S^* \rightarrow Q$, by induction on the length of the words, by defining for $q\in Q$, $s \in S$ and ${\bf w}\in S^*$, $\bar{\delta}(q, \varepsilon)=q$,  and  $\bar{\delta}(q, s{\bf w})=\bar{\delta}(\delta(q,s) , {\bf w})$. In general, we represent an automaton $\mathcal{A}$ by a labelled multi-graph with vertex set $Q$, oriented and labelled edges  $q \stackrel{s}{\rightarrow} q' $ whenever $\delta(q,s)= q'$, the initial state being pointed by an arrow and the final states being doubled-circled.

\begin{figure}[h!]
\includegraphics[scale=1]{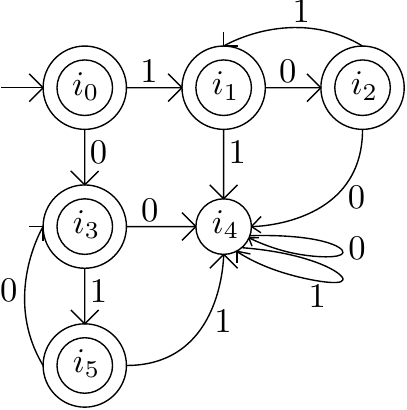}
\caption{Automaton on the alphabet $S=\{0,1\}$.}
\end{figure}

The language $L(\mathcal{A})$ \emph{recognized} by the automaton $\mathcal{A}$ is the set of words ${\bf w}\in S^*$ such that $\bar{\delta}(i_0, {\bf w}) \in F$.  We also say that a such word is \emph{recognized} in this case. In terms of graph, it means that starting from the state $i_0$,  if we read the word $\bf w$ letter by letter, moving each time along the edge labelled by the letter, we end on a vertex belonging to $F$.

Languages which can be recognized by a finite state automaton are called \emph{regular}.  Regular languages are closed under complementation and intersection \cite{HOP}. Moreover, a famous theorem of Kleene ensures us that regular languages coincide with rational languages (languages which can be described by a rational expression). They also satisfies the following theorem.

\begin{Theorem}[{\cite{HOP}}]\label{kleene}
The length generating function of a regular language is a rational series.
\end{Theorem}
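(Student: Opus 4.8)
The plan is to use the classical transfer-matrix (adjacency-matrix) method. By the definitions recalled above, a regular language is recognized by a finite, deterministic, complete automaton, so I fix such an automaton $\mathcal{A}=(S,Q,\delta,i_0,F)$ with $L=L(\mathcal{A})$. I would index matrices by the finite set $Q$ and introduce the \emph{transfer matrix} $T$, with $T_{q,q'}$ equal to the number of letters $s\in S$ such that $\delta(q,s)=q'$. The role of determinism is that reading a word ${\bf w}\in S^n$ from a state $q$ traces out exactly one path in the graph of $\mathcal{A}$, so the number of words ${\bf w}\in S^n$ with $\bar{\delta}(q,{\bf w})=q'$ is precisely $(T^n)_{q,q'}$; this follows by a straightforward induction on $n$ from the recursion $\bar{\delta}(q,s{\bf w})=\bar{\delta}(\delta(q,s),{\bf w})$.

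Next I would turn acceptance into linear algebra. Let $u$ be the row indicator vector of $\{i_0\}$ and $v$ the column indicator vector of $F$ in dimension $|Q|$. Then the number of words of length $n$ in $L$ is
\[
|L\cap S^n|=\sum_{f\in F}(T^n)_{i_0,f}=u\,T^n\,v,
\]
and therefore the length generating function of $L$ equals
\[
\sum_{n\ge 0}|L\cap S^n|\,x^n=\sum_{n\ge 0}u\,T^n\,v\,x^n=u\Bigl(\sum_{n\ge 0}x^nT^n\Bigr)v=u\,(I-xT)^{-1}\,v,
\]
where the geometric series makes sense as a formal power series (and converges for $|x|$ small) since $T$ is a fixed square matrix.

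Finally I would apply Cramer's rule: $(I-xT)^{-1}=\det(I-xT)^{-1}\operatorname{adj}(I-xT)$, where each entry of $\operatorname{adj}(I-xT)$ is a polynomial in $x$ (a signed minor) and $\det(I-xT)$ is a polynomial in $x$ with constant term $1$, hence invertible as a formal power series. Consequently $u\,(I-xT)^{-1}v$ is a quotient of two polynomials in $x$, i.e.\ a rational series, which is the assertion. The only step requiring any care is ensuring that the automaton is deterministic, so that counting accepted \emph{words} coincides with counting accepting \emph{paths}; given the definition of automaton adopted in the excerpt this is automatic, and the rest is formal manipulation, so no genuine obstacle arises.
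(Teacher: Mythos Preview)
Your argument is correct and is the standard transfer-matrix proof of this classical fact. Note, however, that the paper does not give its own proof of this theorem: it is quoted as a known result from \cite{HOP} and used as a black box, so there is nothing to compare against beyond observing that your proof is the expected one.
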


\section{Rationality of the generating function of CFC elements}\label{preuverecauto}

This section is devoted to the proof of Theorem~\ref{thmintro} and to its consequences.

\subsection{Rationality of generating function of reduced expression}
The aim of this section is to prove the main following result.
 \begin{Theorem}\label{thmautomate}
 Let $(W,S)$ be a Coxeter system. The set $L$ of reduced expressions of cyclically fully commutative elements is a regular language on the alphabet $S$.
 \end{Theorem}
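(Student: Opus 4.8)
The plan is to construct an explicit finite, deterministic, complete automaton $\mathcal{A}$ over the alphabet $S$ whose recognized language is exactly $L$, the set of reduced expressions of CFC elements; Theorem~\ref{kleene} then has nothing to do with this particular statement, but the construction here is what feeds into it later. The first ingredient I would isolate is a \emph{local} characterization of when a reduced word $\mathbf{w} = s_1 \cdots s_n$ is the reduced expression of a CFC element. Being CFC means every cyclic shift of $\mathbf{w}$ is a reduced word for some FC element. Combining Stembridge's criterion (no reduced word commutation-equivalent to $\mathbf w$ contains a braid word $sts\cdots$ of length $m_{st}$) with the cyclic condition, one gets a condition of the form: for each pair $s,t$ with $3 \le m_{st} < \infty$, no cyclic rotation of $\mathbf w$ is commutation-equivalent to a word containing the braid word on $s,t$. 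The key point — and this is what makes an automaton possible — is that whether a braid word can be "created" by commutations is detectable by a bounded amount of information about the word read so far: essentially, for each generator $s$, the relevant data is the subset of generators $t$ (with $m_{st} \ge 3$) that currently "see" an $s$ without an intervening non-commuting letter, together with enough of the cyclic boundary data. I would make this precise using the heap/poset model of FC elements.

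The second step is to turn this into states. A state of $\mathcal{A}$ should record, after reading a prefix $s_1 \cdots s_k$: (i) whether the prefix is still reduced and still FC-compatible (a "dead" state absorbs all failures), and (ii) a bounded-size summary of the "right border" of the heap of $s_1 \cdots s_k$ — for instance, for each $s \in S$, the set of letters that can be brought adjacent to a final occurrence of $s$ by commutations, which is exactly what is needed to test, upon reading the next letter, whether a braid word appears or whether the word stays reduced. Since $S$ is finite, there are only finitely many such summaries, so $Q$ is finite. The transition function $\delta$ updates this summary when a new letter is appended; on any move that would complete a braid word, make a word non-reduced, or otherwise violate FC-ness, $\delta$ sends the state to the dead state. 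This handles the non-cyclic part. For the cyclic part I would enlarge the state to also remember a bounded summary of the \emph{left} border (the prefix that was read first), so that at acceptance time one can check that all cyclic shifts — equivalently, "wrapping" the right border around to meet the left border — also avoid braid words; the final states $F$ are those states whose left/right border summaries are jointly compatible with the CFC condition. One must check that this extra left-border data is genuinely bounded: it is, because only letters within commutation-distance of the cyclic seam matter, and that distance is controlled by $\max\{m_{st} : m_{st} < \infty\}$.

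The main obstacle, and the technical heart of the argument, is the correctness proof: showing that the bounded summary carried in the states is \emph{sufficient} — that two prefixes with the same summary are genuinely interchangeable as far as extendability to reduced CFC words is concerned — and, dually, that it is \emph{necessary} to track exactly this much. This amounts to proving a "local confluence"-type lemma: if a braid word can be produced from some cyclic shift of $\mathbf w$ by commutations, then it can be produced by commutations confined to a window of bounded size, so that the automaton's finite memory never misses a violation, and conversely the automaton never rejects a genuine reduced CFC word. I expect this to require a careful case analysis on $m_{st}$ and on how the relevant generators sit in the Coxeter diagram, together with the heap-combinatorial fact that commutation classes of FC elements are in bijection with labelled posets whose covering relations are controlled by non-commutation. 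Once correctness is established, $\mathcal{A}$ is by construction finite, deterministic and complete, $L(\mathcal{A}) = L$, hence $L$ is regular, which is the assertion of Theorem~\ref{thmautomate}.
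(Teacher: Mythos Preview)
Your proposal is correct and follows essentially the same route as the paper: the paper's automaton records, for each non-commuting pair $(s,t)$ with $3\le m_{st}<\infty$, a ``current chain'' $CC_{st}$ (your right-border summary) and an ``initial chain'' $IC_{st}$ (your left-border summary), together with the data needed to keep the word reduced and FC, and the final states are precisely those for which the concatenation of $CC_{st}$ with $IC_{st}$ is too short and too well-formed to produce a braid word under the cyclic wrap. The ``careful case analysis'' you anticipate is exactly the content of the paper's two technical lemmas establishing that these bounded chains faithfully encode the maximal alternating $s,t$-factors obtainable by commutation at both ends of the word.
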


Let $(W,S)$ be a Coxeter system. To show that the set $L$ of reduced expressions of CFC elements is a regular language on the alphabet $S$,  is is sufficient to show that there exists an automaton $\mathcal{A}$ which recognizes the language $L$. We here explicitly build a such automaton. The proof is divided in three parts: the first describes the states of the automaton, the second indicates how to construct the transition function $\delta$ and the third shows the property of this automaton. The principal idea is to build an automaton which reads only words which are reduced expressions of FC elements (and send other words to a sink state). Moreover, we encode in the states the necessary informations to determine whether the word corresponds to a CFC element or not, and these informations allow us to determine which states are  final.
\medskip

{\bf Description of the states}

The alphabet of the automaton is $S$.  We construct the set of states $Q$ successively, state by state. A state  $q \in Q$ is formally defined as \begin{equation*}
q=(E_q, E_q', (CC_{st,q},IC_{st,q}, B_{st,q}, B_{ts,q})_{(s,t) \in S^2}),
\end{equation*}
where $E_q$ is a subset of $S$, $E_q'$ is a subset of $S^2 \times \{0,1\}$, and for each pair $(s,t) \in S^2$, $CC_{st,q}$ and $IC_{st,q}$ are words on the alphabet $\{s,t, \underline{s}, \underline{t}\}$ of length smaller than $m_{s,t,}$ and both $B_{st,q}$ and $B_{ts,q}$ are boolean. According to this definition, there is only a finite number of states. We now indicate also in an informal manner the informations which are encoded by this state to help the comprehension of this construction of the automaton, however the precise statements of the properties of this automaton are in the last part of the proof:
\begin{itemize}
\item the set $E_q \subset S$ of possible exit letters. It indicates, if a letter $s$ belongs to $E_q$, that if we read  the letter $s$ from this state $q$, the word that we have already read (including the letter $s$) is reduced. If a letter $s$ does not belong to $E_q$, reading the letter $s$ from this state $q$ implies that the word we have already read (including the letter $s$) is not reduced.

\item The set $E'_q$  of pairs of letters $(s,t,0)$ or $(s,t,1)  \in S^2 \times \{0,1\}$   such that if we read the letter $s$ from $q$ , we create a chain  $st\ldots$ or $ts\ldots$ with length $m_{st}$. A such pair is \emph{underlined} if its third component is $1$ and \emph{non underlined} otherwise. In what follows, we will use only this notion of underlined pair, and not the formal definition with a third boolean component. The meaning of this underlined pair will be explained below.

\item For each pair of generators $(s,t) \in S^2$ such that $\infty>m_{st} \geq 3$, $CC_{st,q}$ and $IC_{st,q}$ are two words with letters $s$,  $t$, $\underline{s}$ and $\underline{t}$. In what follows, we will refer to these two words as character chains. The explanation is that we prefer this denomination to avoid the possible confusion between these words and the word reading by the automaton. Moreover, these words are the analogues of the chains defined in the heap of the word reading by the automaton (see for instance \cite{PET4}). 

The first chain is denoted by $CC_{st,q}$ ($CC$ stands for ``current chain''), and indicates the longest chain of type $sts\ldots$ or $tst\ldots$ (with letters eventually underlined) that we read in the word since the last time we encounter in the reading a generator that does not commute with $s$ or $t$.  (In what follows, we identify the chains $CC_{st,q}$  and $CC_{ts,q}$). 

The second chain is denoted by  $IC_{st,q}$  and indicates the longest chain of type $sts\ldots$ or $tst\ldots$ that we have read in the beginning of the word, until we encounter a generator that does not commute with $s$ or $t$.  (Here, $IC$ stands for ``initial chain''). Notice that in this word, the letters are never underlined. 

The two booleans are denoted by $B_{st,q}$ and $B_{ts,q}$; they indicate that if we can always in the construction of the next states add a letter  $s$ (respectively $t$)  to the chain $IC_{st}$, \emph{i.e} we did not already read in the word a generator that does not commute with  $s$ (respectively $t$).
\end{itemize}

\medskip

{\bf Building of the automaton}

We now describe the building of the states  and the transition function of the automaton, which will be an iterative process. Starting with the initial state and a sink, we construct all the states which can be reach by a transition from this initial state through a procedure described below, and we iterate this process until we create all the states of the automaton. 
 
We start from an automaton with two different states. The first is the initial state $i_0$  with set $E_{i_0}$ equal to $S$, the set $E_{i_0}'$ is empty, and for all pairs of generators $(s,t)$ with $m_{st}\geq 3$, the chains $CC_{st,i_0}$ and $CC_{st,i_0}$ are empty and the booleans $B_{st,q}$ and $B_{ts,q}$ are equal to $True$ (this means that from the initial state, we can read  any letter, that we have currently read neither current chains nor initial chains, and that this previous chains can be completed). The second state is a sink, denoted by $P$; we do not define specific sets or chains for this state, all the transitions starting from this state go to this state.
 
 Next, for all states $q$ of the automaton, except for the state $P$, we apply the following process.
\begin{itemize}
\item For each generator $s$ in $S$ which does not belong to $E_q$ or which is equal to the first component of a pair in $E'_q$ (this first component can eventually be underlined), we add an edge starting from $q$ and going to the sink state $P$ and with label $s$ (equivalently, we fix $\delta(q,s)=P$). This means that we read a not reduced word or a word which contains up to commutation an alternating chain  $sts\ldots$ of length $m_{st}$, respectively.

\item For any other generator $s$, we create a new state $r$, defined as follow.  

The set $E_r$ is equal to the set $E_q$ without $s$ and in which we add all the generators in $S$ which do not commute with $s$.

For all  generators $t$ in $S$  which do not commute with  $s$, two cases can occurs according to the value of $B_{st,q}$, these cases are summarized in Figure~\ref{figrecap}, below.   If $B_{st, q} = True$, then the chain  $IC_{st,r}$ is equal to the concatenation of  $IC_{st,q}$ and $s$ and the chain  $CC_{st,r}$ is equal to the concatenation of  $CC_{st,q}$ and $s$, with $s$ underlined. If $B_{ts,q}= True$, we set $B_{st,r}= B_{ts,r}:= True$, else we set $B_{st,r}=B_{ts,r}:=False$. If $B_{st, q} = False$, then $IC_{st,r}=IC_{st,q}$ and the chain $CC_{st,r}$ is equal to the concatenation of $CC_{st,q}$ and $s$, with $s$ not underlined. We set $B_{st,r}=B_{ts,r}= False$.

\begin{figure}[!h]
\includegraphics[scale=1]{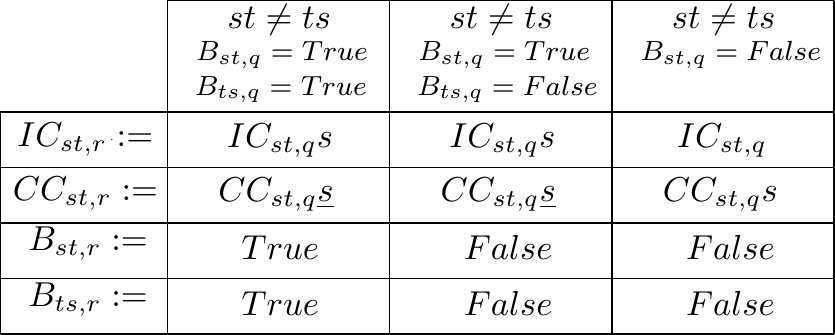}
\caption{\label{figrecap}Description of $r$. }
\end{figure}
 
  If neither  $t$ nor $u$ commute with $s$ then we set that $CC_{tu,r}$ is the empty chain. We fix  $B_{tu,r}:= False$, $B_{ut,r}:= False$, and $IC_{tu,r}:=IC_{tu,q}$.

  If $t$ and $u$ commute with $s$, then the chain $CC_{tu,r}$ is equal to the chain $CC_{tu,q}$. We fix $B_{tu,r}:=B_{tu,q}$, $B_{ut,r}:=B_{ut,q}$, and $IC_{tu,r}:=IC_{tu,q}$.  
 
 If $t$ do not commute with $s$ and  $u$ commute with $s$, many cases must be considered according to the last letter  of the chain  $CC_{tu,q}$. If the chain  $CC_{tu,q}$ ends by $t$ (underlined or not) then $ CC_{tu,r}$ is empty, and we fix $B_{tu,r} := False $, $B_{ut,r}:=B_{ut,q}$, and $IC_{tu,r}:= IC_{tu,q}$. If the chain $CC_{tu,q}$ ends with $u$ underlined, then $ CC_{tu,r}$ is equal to $u$ underlined, and we fix $B_{tu,r} := False $, $B_{ut,r}:=False$, and $IC_{tu,r}:= IC_{tu,q}$. If the chain $CC_{tu,q}$ ends by $u$ not underlined, then $ CC_{tu,r}$ is equal to  $u$ not underlined, and we fix $B_{tu,r} := False $, $B_{ut,r}:=False$, and $IC_{tu,r}:= IC_{tu,q}$.
 
All these previous cases are summarized in Figure~\ref{figrecap2}, below. 
\begin{figure}[!h]
\includegraphics[scale=1]{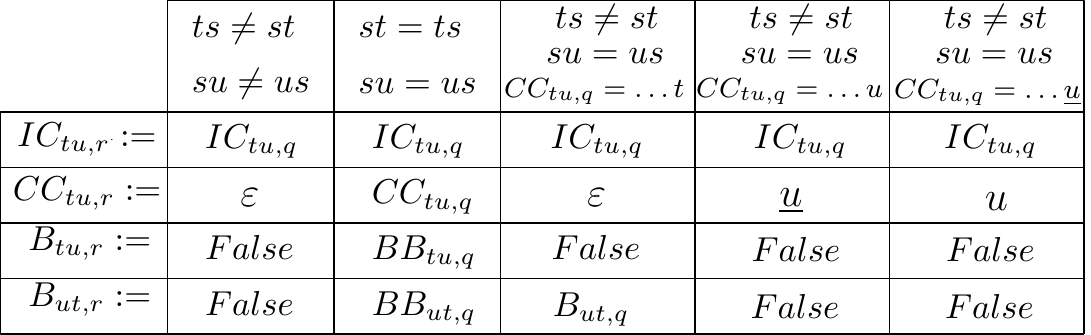}
\caption{\label{figrecap2}Description of $r$. }
\end{figure} 
 
The set $E'_r$ is equal to the set $E'_q$ with two modifications: we delete all the pairs for which the first component does not commute with $s$;  and we add all the pairs $(t,s)$ such that $t$ is a generator which does not commute with $s$ and such that  $CC_{st,r}$ has length $m_{st}-1$. In the case where the letter $s$ which has been added to $CC_{st,r}$ is underlined, then the pair   $(t,s)$ is also underlined in $E'_r$.

We now end the description of the state $r$. If $r$ is equal to an already build state $r'$  in the automaton (which means that all the components describing $r$ and $r'$ are equal, including underlines), then we delete the state $r$ and we add an edge starting from $q$ and going to $r'$ labelled by $s$. In other terms, we fix $\delta(q,s)=r'$. Else, we fix $\delta(q,s)=r$.

\end{itemize}

This iterative process is finite as we can only create a finite number of states. Indeed, there is a finite number of sets  $E_q$ and $E_q'$, there is a finite number of booleans  $B_{st}$, which can only take two different values and there is a finite number of chains $CC_{st}$ and $IC_{st}$, which are all of length at most  $m_{st}-1$ by construction of the automaton.

To end the construction of the automaton, we only need now to define the set of final states. These are the states for which the following conditions are both satisfied:
\begin{itemize}

 \item For all pairs of generators  $(s,t)$ with $m_{st}\geq 3$, the concatenation of the chain $CC_{st,q}$, in which we delete all the underlined letters, with the chain  $IC_{st,q}$ has length smaller or equal to $m_{st}-1$ and does not contain factors $ss$ or $tt$ ;
 \item If a pair $(s,t)$ belongs to $E'_q$ and is not underlined, then the initial chain $IC_{st,q}$ starts with $ t $ or is empty.
 \end{itemize}
 
\medskip

{\bf Properties of the automaton}

The automaton satisfies some properties that we now show.  The following lemma establish the property of the states, as it was mentioned informally in their definitions.
\begin{lemma}\label{lemme1}Let  $r$ be a state of the automaton different from the sink, and let ${\bf w}$ be a word in $S^*$ such that $\bar{\delta}(i_0,{\bf w})=r$. Then, the following properties holds:

\begin{itemize}

\item[($i$)] in $CC_{st,r}$, the underlined letters are all in the beginning of the chain. A letter $s$ is underlined in  $CC_{st,r}$  if and only if during the construction of the state $r$, the letter $s$ was added both to $CC_{st,r}$ and to $IC_{st,r}$. No letter is underlined in $IC_{st,r}$.

\item[($ii$)] The word ${\bf w}$ is a reduced expression, and  $E_q$ is the set of the letters in $S$ such that the word ${\bf w}s$ is also reduced.

\item[($iii$)] Let  $(s,t)$ be a pair of non-commuting generators. The word ${\bf w}$ is commutation equivalent to a word of the form  = ${\bf w}_1CC_{st,r}$ with ${\bf w}_1 \in S^*$, and $CC_{st,r}$ is maximal in the following sense: there is no chain  $C$ consisting of letters $s$ and $t$ with length greater than $CC_{st,r}$ such that ${\bf w}$ is commutation equivalent to ${\bf w}_1' C$ with ${\bf w}_1' \in S^*$.

\item[($iv$)]Let  $(s,t)$ be a pair of non-commuting generators. If we denote by   $D$ the chain $CC_{st,r}$ in which we delete the underlined letters, then the word   ${\bf w}$  is commutation equivalent to a word $IC_{st,q} {\bf w}_2 D$, where ${\bf w}_2$ is a word in $S^*$. This decomposition is maximal, in the sense that if the word  ${\bf w}$ is commutation equivalent to a word ${\bf w}_3{\bf w}_4{\bf w}_5$, with ${\bf w}_3$ and ${\bf w}_4$ two words with only  $s$ and $t$ as letters, then $|{\bf w}_5{\bf w}_3|\leq |DIC_{st,r}|$.

\item[($v$)] The set $E'_r$ contains the pair $(s,t)$ if and only if the word ${\bf w}s$ is commutation equivalent to a word  which contains an alternating chain of letters $t$ and $s$ with length $m_{st}$. The pair $(s,t)$ is underlined if and only if the (induced) alternating chain of letters  $s$ and $t$ with length $m_{st}-1$ which occurs up to commutation in  ${\bf w}$ is also the initial chain $IC_{st,r}$.

\end{itemize}
\end{lemma}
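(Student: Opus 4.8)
The plan is to prove all five statements simultaneously by induction on the length of $\bf w$, since the construction of each new state $r$ from a state $q$ is defined componentwise and each component of $r$ depends only on the corresponding (and a few neighbouring) components of $q$. The base case is $\mathbf{w} = \varepsilon$, where $r = i_0$: here $E_{i_0} = S$, all chains are empty, all booleans are $True$, and $E'_{i_0} = \emptyset$, so $(i)$--$(v)$ all hold trivially (for $(ii)$, the empty word is reduced and $\mathbf{w}s = s$ is always reduced; for $(iii)$ and $(iv)$ the empty chain is vacuously maximal). For the inductive step, write $\mathbf{w} = \mathbf{w}'s$ with $\bar\delta(i_0,\mathbf{w}') = q \neq P$ (if $\bar\delta(i_0,\mathbf{w}') = P$ there is nothing to prove since $P$ is a sink and $r = P$), and $r = \delta(q,s) \neq P$; in particular $s \in E_q$ and $s$ is not the first component of any pair of $E'_q$.

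The first step is $(i)$, which is purely a bookkeeping statement about the construction: inspecting Figures~\ref{figrecap} and~\ref{figrecap2}, a letter is appended to $CC_{st,r}$ underlined exactly when it is simultaneously appended to $IC_{st,r}$ (the $B_{st,q}=True$ case), and underlined letters, once a non-underlined letter or a reset occurs, are never reintroduced at the front; so underlined letters form a prefix. Next I would prove $(ii)$: that $\mathbf{w}$ is reduced follows because $s \in E_q$, which by induction means exactly that $\mathbf{w}'s$ is reduced; and the description of $E_r$ (remove $s$, add all generators not commuting with $s$) is precisely the exchange-condition statement of which $u$ satisfy $\ell(\mathbf{w}'su) > \ell(\mathbf{w}'s)$ — here one uses the standard fact that $\mathbf{w}u$ fails to be reduced iff $u$ occurs in the "right descent set" reachable by commutations, together with the fact (from $(iii)$, proved next or interleaved) that the automaton correctly tracks which letters can be brought to the end. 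Then $(iii)$: the current chain $CC_{st,r}$ is the maximal $\{s,t\}$-suffix of $\mathbf{w}$ up to commutation; the update rules in Figure~\ref{figrecap2} (append $s$ when $s\in\{s,t\}$; keep the chain when the new letter commutes with both; truncate to the last letter, or reset to empty, according to whether the new letter equals $t$, fails to commute with $t$ but commutes with $u$, etc.) are exactly the rules for how the maximal $\{s,t\}$-suffix of a word changes when one letter is appended on the right — this is a case analysis matching the figure, using that appending a letter not commuting with both $s,t$ "blocks" everything before it. Statement $(iv)$ about the initial chain is the mirror image: $IC_{st,r}$ is frozen (via $B_{st,q}=False$) as soon as a letter not commuting with $s$ or $t$ has been read, which is exactly when the $\{s,t\}$-prefix of $\mathbf{w}$ up to commutation can no longer grow; the maximality bound $|\mathbf{w}_5\mathbf{w}_3| \le |D\,IC_{st,r}|$ combines the prefix-maximality of $IC$ with the suffix-maximality of $D = CC_{st,r}$ minus underlines, the point being that an $\{s,t\}$-block in the middle of the word cannot be both a prefix-extension and a suffix-extension simultaneously once the chains are disjoint. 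Finally $(v)$ follows from $(iii)$: $(s,t) \in E'_r$ iff $CC_{st,r}$ has length $m_{st}-1$ after appending $s$, which by $(iii)$ means $\mathbf{w}s$ has an $\{s,t\}$-alternating suffix of length $m_{st}$ up to commutation; and the pair is underlined iff that length-$(m_{st}-1)$ chain in $\mathbf{w}$ consists entirely of underlined letters, which by $(i)$ means it coincides with $IC_{st,r}$.

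The main obstacle I anticipate is the case analysis in $(iii)$ and $(iv)$: one must verify that the finitely many update rules depicted in Figures~\ref{figrecap} and~\ref{figrecap2} genuinely capture the evolution of the maximal monochromatic prefix and suffix of a word modulo the commutation monoid, including the subtle interaction with a \emph{third} generator $u$ that commutes with $s$ but not with $t$ (where the chain must be truncated to its last letter rather than reset or preserved), and the coupling of the two booleans $B_{st}$ and $B_{ts}$. The cleanest way to handle this is to phrase the underlying combinatorial fact abstractly — for a word $\mathbf{v}$ and a pair $\{s,t\}$, describe the longest $\{s,t\}$-prefix and suffix of any word commutation-equivalent to $\mathbf{v}$, and how each changes under right-multiplication by a single generator — prove that once, and then simply observe that the automaton's transition rules implement it. The remaining statements $(i)$, $(ii)$, $(v)$ are then comparatively short, each being a direct consequence of the construction together with $(iii)$ (and, for $(ii)$, a standard application of the exchange condition).
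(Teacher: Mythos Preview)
Your proposal is correct and follows essentially the same approach as the paper: induction on the length of $\mathbf{w}$ with the empty word as base case, writing $\mathbf{w}=\mathbf{w}'u$ and doing a case analysis on how $u$ interacts with the pair $\{s,t\}$ for $(iii)$, declaring $(iv)$ analogous, and deriving $(v)$ from $(iii)$. The only cosmetic difference is that the paper separates out $(i)$ as an immediate consequence of the construction rather than folding it into the induction, and for $(ii)$ the paper simply asserts (without naming the exchange condition) that the letters $t$ with $\mathbf{w}t$ reduced are exactly $E_q\setminus\{u\}$ together with the generators not commuting with $u$.
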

\medskip

\begin{proof}

Property $(i)$ follows immediately from the construction of the automaton: one adds underlined letters only if the chain is empty or after underlined letters.
 
Properties  $(ii),~(iii),~(iv)$, and $(v)$ are shown by induction on the length of the word ${\bf w}$. These four properties are satisfied if  ${\bf w}$ is the empty word. If $\bf w$ is not empty, we write ${\bf w}={\bf w}'u$, with ${\bf w}'$ a word in $S^*$ and $u$ a generator. We denote by $q:= \bar{\delta}(i_0, {\bf w}')$, by definition of $\bar{\delta}$, we also have $\delta(q,u)=r$. This is illustrated in Figure~\ref{figauto2}, below.

\begin{figure}[!h]
\includegraphics[scale=1]{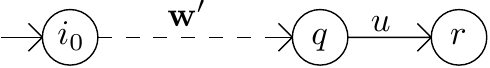}
\caption{\label{figauto2}Notations used in the proof.}
\end{figure}

We first prove $(ii)$. By induction hypothesis, the set $E_q$ contains exactly the letters $s$ such that  the word  ${\bf w}'s$ is reduced.  As  $r$ is not the sink by assumption, by construction of the transitions of the state $q$, the letter $u$ belongs to $E_q$. So, the word ${\bf w}={\bf w}'u$ is reduced. Moreover, the set of letters   $t$ such that ${\bf w}t$ is reduced is equal to  $E_q$ without $u$ and to which we add all the generators which do not commute with $u$. That is exactly the definition of the set $E_r$. This proves $(ii)$.

\medskip

Let us prove now $(iii)$. Let $(s,t)$ be a pair of non-commuting generators. The induction hypothesis ensures us that  ${\bf w}'$ is commutation equivalent to a word ${\bf w}_1'CC_{st,q}$, and that this decomposition is maximal. 

If $u$ is a generator different from $s$ and $t$, and $u$ commute with both $s$ and $t$, then ${\bf w}={\bf w}'u$ is commutation equivalent to ${\bf w}_1CC_{st,q}$ if we set ${\bf w}_1={\bf w}_1'u$, and this decomposition is maximal. Moreover, by construction of the automaton, we have $CC_{st,q}=CC_{st,r}$, which concludes the proof of $(iii)$ in this case.
 
 If  $u$ is a generator different from $s$ and $t$, and  $u$ commute neither with $s$ nor with $t$, then if  ${\bf w}$ is commutation equivalent to a word $ {\bf w}_1C$ where the word $C$ has only  $s$ and $t$ as letters, we have $C= \varepsilon$. By construction of the automaton, in this case we also have $CC_{st,r}= \varepsilon$.
 
 If $u$ is a generator different from $s$ and $t$, such that $u$ commute with $s$ but does not commute with $t$, two cases can occur. If $CC_{st, q}$ ends by a letter $s$, then ${\bf w}$ is commutation equivalent to ${\bf w}_1'CC_{st,q}u$, and so   ${\bf w}$ is commutation equivalent to ${\bf w}_1s$ with ${\bf w}_1$ a word ending by $u$; and this last decomposition is maximal. By construction of the automaton, in this case we have $CC_{st,r}=s$. If $CC_{st, q}$ ends by $t$, then ${\bf w}$ is commutation equivalent to  ${\bf w}_1'CC_{st,q}u$. The longest word $C$ with $s$ and $t$ as letters such that we can decompose ${\bf w}={\bf w}_1C$ is the empty word, and we also have in this case $CC_{st,r}=\varepsilon$.
 
If $u$ is equal to $s$, then necessarily the chain $CC_{st,q}$ is empty or ends by $t$. Indeed,  ${\bf w}$ is commutation equivalent to ${\bf w}_1'CC_{st,q}s$. If $CC_{st,q}$ does not end by $t$ and is not empty, ${\bf w}$ is not reduced, which is a contradiction with property $(ii)$. The facts that  ${\bf w}$ is commutation equivalent to ${\bf w}_1'CC_{st,q}s$, that this decomposition is maximal by induction hypothesis , and that we have by construction of the automaton in this case $CC_{st,r}=CC_{st,q}s$ achieve the proof of $(iii)$.
 
\medskip 
 
The proof of $(iv)$ is very similar to the proof of  $(iii)$, so we do not give the details here. We simply notice that the underlined letters appears here to avoid that we use a same letter in both the initial $IC_{st,r}$ and the current $CC_{st,r}$  chains, in the cases where all the letters of the word commute with $s$ and $t$.

\medskip
 
Now we prove $(v)$. Assume that the pair $(s,t)$ belongs to $E'_r$. Two cases can occur, according to the construction of the automata. In the first case, the pair $(s,t)$ belongs also to $E'_q$. By construction of $E_r$, we can deduce that $u$ commutes with $s$. By induction hypothesis, the word ${\bf w}'s$ contains up to commutation an alternating chain of letters  $s$ and $t$ with length $m_{st}$. This is also the case of the word ${\bf w}'us$ as $u$ commute with $s$. In the second case, the pair $(s,t)$ does not belong to $E_q$. This implies by construction of $E_r$ that the pair $(s,t)$ is such that $CC_{st,q}$ is of length $m_{st}-1$ and ends by $t$, and then the property $(iii)$ ensures us that ${\bf w}s$ is commutation equivalent to ${\bf w}_1CC_{st,q}s$. The converse sense of $(v)$ can be show in a similar way.\end{proof}

We now focus on the properties of the words read by the automaton.

\begin{lemma}\label{lemme2}
Let  $r$ be a state of the automaton different from the sink, and let ${\bf w}$ be a word in $S^*$ such that $\bar{\delta}(i_0,{\bf w})=r$. Then the following property are satisfied:

\begin{itemize}

\item[($vi$)] the word ${\bf w}$ corresponds to a fully commutative element.

\item[($vii$)] If $r \in F$, then ${\bf w}$ corresponds to a cyclically fully commutative element. If $r \notin F$, then ${\bf w}$ does not correspond to a CFC element.
\item[($viii$)] Moreover, if ${\bf w}$ is a word in $S^*$ such that $\bar{\delta}(i_0,{\bf w})=P$, then either  ${\bf w}$ is not reduced or ${\bf w}$ does not correspond to a FC element.

\end{itemize}
\end{lemma}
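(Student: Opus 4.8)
The plan is to establish $(vi)$ and $(viii)$ simultaneously by induction on the length of $\mathbf{w}$, and then to derive $(vii)$ directly from Lemma~\ref{lemme1}; it is precisely the lack of a length‑compatible recursive description of CFC elements that makes $(vii)$ the substantial point. Throughout, when $\mathbf{w}\neq\varepsilon$ write $\mathbf{w}=\mathbf{w}'u$ with $u\in S$, set $q:=\bar{\delta}(i_0,\mathbf{w}')$, so that $\delta(q,u)=r$ as in Figure~\ref{figauto2}, and assume $(vi)$, $(viii)$ for $\mathbf{w}'$. For $(vi)$: if $r\neq P$ then $q\neq P$, so $\mathbf{w}'$ is FC and, by Lemma~\ref{lemme1}$(ii)$, $\mathbf{w}$ is reduced. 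If $\mathbf{w}$ were not FC, then by Stembridge's criterion \cite[Prop.~2.1]{STEM1} it is commutation equivalent to a word containing a braid word $B$ on a pair $(a,b)$ with $m_{ab}\geq 3$; since $\mathbf{w}'$ is FC, a short exchange argument shows that $B$ must \emph{end} with the last letter $u$ (otherwise $B$, up to commutation, would already sit inside $\mathbf{w}'$), so $\mathbf{w}=\mathbf{w}'u$ is commutation equivalent to a word in which an alternating chain of length $m_{ab}$ ending in $u$ occurs as a factor. By Lemma~\ref{lemme1}$(v)$ applied to $(q,\mathbf{w}')$ this puts a pair with first component $u$ into $E'_q$, hence forces $\delta(q,u)=P$ by construction, a contradiction; therefore $\mathbf{w}$ is FC.

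For $(viii)$: if $\bar{\delta}(i_0,\mathbf{w})=P$ then either $q=P$ — and then the induction hypothesis for $\mathbf{w}'$, together with the facts that appending a letter preserves non‑reducedness and that a reduced word obtained by appending a letter to a word containing a braid word up to commutation again contains one, gives the conclusion — or $q\neq P$, in which case the edge $\delta(q,u)=P$ was created because $u\notin E_q$ or because $u$ is the first component of a pair of $E'_q$. The first case gives $\mathbf{w}$ non‑reduced by Lemma~\ref{lemme1}$(ii)$; the second gives, via Lemma~\ref{lemme1}$(v)$, that $\mathbf{w}$ is commutation equivalent to a word containing a braid word, hence is non‑reduced or does not correspond to an FC element.

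For $(vii)$, let $w$ be the element of $\mathbf{w}$; by $(vi)$ and Lemma~\ref{lemme1}$(ii)$, $w$ is FC and its reduced words are exactly the commutation class of $\mathbf{w}$. Using $(iii)$–$(v)$ of Lemma~\ref{lemme1} one reads off the meaning of the two conditions defining $F$: condition (a) says that for each pair $(s,t)$ the chain $D\cdot IC_{st,r}$, where $D$ is $CC_{st,r}$ with its underlined (hence, by $(i)$, leading) letters deleted, obtained by gluing the final chain of $\mathbf{w}$ to its initial chain, is an alternating chain of length at most $m_{st}-1$; condition (b) handles the remaining situation in which an alternating chain of length $m_{st}-1$ already present inside $\mathbf{w}$ (recorded by a non‑underlined pair of $E'_r$) would be completed to a braid word after cyclically moving a left‑descent letter to the end. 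The implication ``$r\notin F\Rightarrow w$ not CFC'' is then direct: if (a) fails for $(s,t)$, then by Lemma~\ref{lemme1}$(iv)$ the word $\mathbf{w}$ is commutation equivalent to $IC_{st,r}\,\mathbf{w}_2\,D$, whose cyclic shift $D\cdot IC_{st,r}\cdot\mathbf{w}_2$ is a cyclic shift of a reduced word of $w$ that either contains two consecutive equal generators (hence is not reduced) or contains a braid word (hence, if reduced, has a non‑FC element); if (b) fails for $(s,t)$, then $IC_{st,r}$ begins with $s$, so $w$ has a reduced word $s\mathbf{v}$, and the non‑underlined hypothesis guarantees $CC_{st,r}$ survives intact inside $\mathbf{v}$, so the cyclic shift $\mathbf{v}s$ of $s\mathbf{v}$ is commutation equivalent to a word containing the braid word $CC_{st,r}\,s$. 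In both cases $w$ is not CFC.

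It remains to prove ``$r\in F\Rightarrow w$ CFC'', the crux of the lemma: one must rule out \emph{every} bad cyclic shift $\mathbf{u}^{(k)}=\mathbf{s}\,\mathbf{p}$ of \emph{every} reduced word $\mathbf{u}=\mathbf{p}\,\mathbf{s}$ of $w$. Since $\mathbf{u}$, hence $\mathbf{s}$ and $\mathbf{p}$, are FC, a braid word occurring up to commutation in $\mathbf{s}\,\mathbf{p}$ must straddle the junction between $\mathbf{s}$ and $\mathbf{p}$, and — using the exchange/deletion condition \cite{BB} — non‑reducedness of $\mathbf{s}\,\mathbf{p}$ likewise produces across that junction either two equal generators or an alternating chain of length $\geq m_{st}$ for a pair $(s,t)$ with $m_{st}\geq 3$. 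In either case this exhibits, in the cyclic word associated with $w$ (its ``cylindrical'' picture, cf.\ \cite{PET4}), a generator repeated around the cylinder or an alternating $(s,t)$‑chain of length $\geq m_{st}$ that wraps around; the maximality assertions of Lemma~\ref{lemme1}$(iii)$ and $(iv)$ guarantee that any such configuration is visible either inside $D\cdot IC_{st,r}$, contradicting (a), or as the completion of a non‑underlined pair of $E'_r$ by a left‑descent letter, contradicting (b). Hence no bad cyclic shift exists and $w$ is CFC. The main obstacle is exactly this converse, and within it the treatment of non‑reduced cyclic shifts: unlike failure of full commutativity, non‑reducedness is not detected by any pattern up to commutation (for instance $stst$ with $m_{st}=3$ is not reduced yet avoids every $aa$ and every braid word up to commutation), so one genuinely needs the exchange condition together with the fine bookkeeping encoded in $CC_{st,r}$, $IC_{st,r}$ and the underlining to pin down the pair $(s,t)$ responsible and the precise way its chain closes up around the cylinder.
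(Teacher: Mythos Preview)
Your proofs of $(vi)$ and $(viii)$ are correct and follow the paper's inductive argument essentially line for line; your treatment of $(vii)$ in the direction $r\notin F\Rightarrow w$ not CFC is likewise the paper's approach, only more explicit.

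There is, however, an error in your closing commentary on the direction $r\in F\Rightarrow w$ CFC. You assert that $stst$ with $m_{st}=3$ ``avoids every $aa$ and every braid word up to commutation''; this is false, since $sts$ is a factor of $stst$ and is a braid word. The broader claim that ``non-reducedness is not detected by any pattern up to commutation'' is also incorrect: by Tits' solution to the word problem, a word with no braid factor up to commutation is reduced if and only if it has no factor $aa$ up to commutation (only commutations are available, and they cannot manufacture an $aa$). Hence the exchange/deletion condition is not the relevant tool. The clean dichotomy you need, and in fact state correctly two sentences earlier, is simply: a cyclic shift $\mathbf{s}\,\mathbf{p}$ fails to be a reduced FC word iff it contains, up to commutation, a braid factor or a square $aa$; since $\mathbf{s}$ and $\mathbf{p}$ are each reduced FC, any such pattern must straddle the junction, and Lemma~\ref{lemme1}$(iii)$--$(v)$ together with your conditions (a) and (b) exclude it. That is exactly the paper's (terse) argument. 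Once you drop the erroneous example and the appeal to the exchange condition, your proof is complete and coincides with the paper's.
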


\begin{proof}

We prove  $(vi)$ by induction  on the length of the word $\bf w$, and we use the same notations as in the proof of Lemma~\ref{lemme1} (see Figure~\ref{figauto2}). Assume that ${\bf w}$ is not fully commutative. As ${\bf w}'$ is fully commutative by induction hypothesis, two cases can occurs. First, ${\bf w}$ could be not reduced, which is impossible according to property $(ii)$ of the previous lemma. Second, the word  ${\bf w}={\bf w}'u$ could be commutation equivalent to a word containing an alternating chain of letters  $t$ and $u$ with length $m_{tu}$. This implies that $u $ belongs to $E'_q$ by property $(v)$. But in this case, by construction of the automaton, the transition starting from $q$ with letter $u$ goes to the sink, and not in the state $r$, which is a contradiction. 
 
\medskip

Now we show $(vii)$. Assume that $r$  is a final state. By property $(i)$ of the previous lemma, we know that the word  ${\bf w}$ corresponds to a fully commutative element.  The only reason which can prevent  ${\bf w}$ from corresponding to a CFC element is the existence of a cyclic shift of  ${\bf w}$ which joints letters from the beginning and from the end of the word $\bf w$ to create a word which is not reduced or does not correspond to a FC element. The restrictions that we fix on initial chains of final states ensure us that this can not happen.

 Indeed, the restriction on pairs belonging to $E'_r$ ensures us that we can not complete through a cyclic shift an alternating chain of length  $m_{st}-1$ of letters $s$ and $t$ ending by a $s$ and following by generators which does not commute with $s$ (which implies that $CC_{st,r}$ is empty); with a generator $t$ belonging to the beginning of the word.  
 
 By the property  $(iv)$ of the previous lemma (and with its notations), we know that for all pairs of not-commuting generators $(s,t)$, the word ${\bf w}$ is commutation equivalent to $IC_{st,r}{\bf w}_2 D$, and this decomposition is maximal. By definition of final states of the automaton, we also have  $|DIC_{st,r}|\leq m_{st}-1$. This maximality ensures us that we can not use a cyclic shift to create a chain  $stst\ldots $ of length $m_{st}$. The condition on the absence of factors $ss$ or $tt$ in $DIC_{st,r}$ ensures us that we can not obtain through cyclic shift a not reduced word. So, the word ${\bf w}$ corresponds to a CFC element if $r$ is a final state. Conversely, if the state $r$ is not final, one of the conditions defining final states is not satisfied, and we verify immediately that a cyclic shift of  ${\bf w}$ is not fully commutative.
 
\medskip
 
Finally, property $(viii)$ follows directly by checking that, by construction, the only way to go to the sink is either to read from a state $q$ a letter which does  not belong to  $E_q$ or to read from a state $q$ a letter $s$ which is the first component of a pair belonging to  $E'_q$. In the first case, by property $(ii)$ of the previous lemma, the word ${\bf w}$ is not reduced, in the second case, ${\bf w}$ is not FC according to property $(v)$.\end{proof}

Now we can achieve the proof of Theorem~\ref{thmautomate}.

\begin{proof}[Proof of Theorem~\ref{thmautomate}]
It remains, to conclude the proof, to notice that properties ($ii$) of Lemma~\ref{lemme1} and ($vii$) of Lemma~\ref{lemme2} ensures us that the so-defined automaton recognizes exactly the reduced expressions of CFC elements.

\end{proof}

From this Theorem~\ref{thmautomate}, we can deduce some corollaries.
 
 \begin{corollary}
Let $(W,S)$ be a Coxeter system. The generating series of reduced decompositions of CFC elements of $W$ is rational.
 \end{corollary}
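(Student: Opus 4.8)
The plan is to invoke Theorem~\ref{thmautomate} together with Theorem~\ref{kleene} essentially verbatim. Write $\widetilde{R}(W^{CFC}) := \bigcup_{w \in W^{CFC}} R(w)$ for the set of all reduced expressions of all CFC elements, so that the generating series under consideration is $\sum_{\mathbf{w} \in \widetilde{R}(W^{CFC})} x^{\ell(\mathbf{w})}$, where $\ell(\mathbf{w})$ is the length of the word $\mathbf{w} \in S^*$. By definition this is precisely the length generating function of the language $L = \widetilde{R}(W^{CFC})$.

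First I would note that Theorem~\ref{thmautomate} asserts exactly that $L$ is a regular language on the alphabet $S$: it is recognized by the finite state automaton $\mathcal{A}$ constructed in the proof of that theorem. Then, since the generating series of reduced decompositions of CFC elements is the length generating function of $L$, Theorem~\ref{kleene} applies directly and yields that this series is a rational function of $x$. This completes the argument.

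There is essentially no obstacle here: the corollary is an immediate consequence of the already-established Theorem~\ref{thmautomate} (which is the genuinely hard result, since it requires the explicit automaton construction and the verification of Lemmas~\ref{lemme1} and~\ref{lemme2}) and the classical fact recalled in Theorem~\ref{kleene}. The only point worth making explicit is the identification of the series $\sum_{\mathbf{w}} x^{\ell(\mathbf{w})}$ over reduced expressions of CFC elements with the length generating function of the regular language $L$, which is immediate from the definitions. One might optionally remark that rationality here is over $\mathbb{Q}$ (indeed over $\mathbb{Z}$ in the sense that the coefficient sequence satisfies a linear recurrence with integer constant coefficients for large degree), since the transition matrix of $\mathcal{A}$ has integer entries.
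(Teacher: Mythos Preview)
Your proof is correct and follows exactly the same approach as the paper: the corollary is obtained as a direct consequence of Theorem~\ref{thmautomate} and Theorem~\ref{kleene}. The additional remarks you make (the explicit identification of the series with the length generating function of $L$, and the comment on rationality over $\mathbb{Q}$) are correct but not needed for the argument.
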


\begin{proof}
It is a direct consequence of Theorems~\ref{thmautomate} and \ref{kleene}.
\end{proof}

We can also re-obtain a theorem due to Nadeau (however with the same scheme of proof).

\begin{corollary}
Let $(W,S)$ be a Coxeter system. The set $L$ of reduced decompositions of FC elements is a rational language on the alphabet $S$.
\end{corollary}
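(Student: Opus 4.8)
The plan is to re-use, almost verbatim, the automaton $\mathcal{A}$ built in the proof of Theorem~\ref{thmautomate}, changing only which states are final. Concretely, I would introduce the automaton $\mathcal{A}'$ having the same alphabet $S$, the same state set $Q$, the same transition function $\delta$ and the same initial state $i_0$ as $\mathcal{A}$, but with set of final states $F' := Q \setminus \{P\}$, where $P$ is the sink. Since $\mathcal{A}'$ is again a finite, deterministic, complete automaton, the language $L(\mathcal{A}')$ is regular, hence rational by Kleene's theorem. It therefore suffices to prove that $L(\mathcal{A}')$ is exactly the set $L$ of reduced expressions of FC elements, and the corollary follows.

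First I would check the inclusion $L(\mathcal{A}') \subseteq L$. Let ${\bf w} \in S^*$ satisfy $\bar{\delta}(i_0,{\bf w}) \in F'$, i.e.\ $\bar{\delta}(i_0,{\bf w}) = r$ for some state $r \neq P$. Then property $(ii)$ of Lemma~\ref{lemme1} says that ${\bf w}$ is reduced, and property $(vi)$ of Lemma~\ref{lemme2} says that ${\bf w}$ corresponds to a fully commutative element; hence ${\bf w} \in L$.

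For the reverse inclusion $L \subseteq L(\mathcal{A}')$, let ${\bf w}$ be a reduced expression of an FC element. Since $\mathcal{A}'$ is complete, $\bar{\delta}(i_0,{\bf w})$ is a well-defined state; if this state were the sink $P$, then property $(viii)$ of Lemma~\ref{lemme2} would force ${\bf w}$ to be either non-reduced or not to correspond to an FC element, contradicting our hypothesis on ${\bf w}$. Hence $\bar{\delta}(i_0,{\bf w}) \in Q \setminus \{P\} = F'$, so ${\bf w} \in L(\mathcal{A}')$. Combining the two inclusions gives $L = L(\mathcal{A}')$, so $L$ is regular, and therefore rational.

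There is no genuine obstacle here: the entire difficulty has already been absorbed into the construction of $\mathcal{A}$ and into Lemmas~\ref{lemme1} and~\ref{lemme2}. The only point deserving a moment's care is the reverse inclusion, where one must observe — exactly what property $(viii)$ guarantees — that $\mathcal{A}$ never routes a bona fide reduced expression of an FC element to the sink; every step of the argument invokes only results established earlier in the excerpt, which is why the proof reduces to this short verification.
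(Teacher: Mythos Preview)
Your proposal is correct and follows essentially the same approach as the paper: the paper's proof also modifies the automaton from Theorem~\ref{thmautomate} by declaring all non-sink states final, and then invokes properties $(ii)$ and $(vi)$ to conclude. Your argument is in fact slightly more careful, since you explicitly appeal to property $(viii)$ for the reverse inclusion, which the paper leaves implicit.
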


\begin{proof}
Let $(W,S)$ be a Coxeter system. We build an automaton  $A$ exactly in the same manner as in the proof of  Theorem~\ref{thmautomate}, with this single difference: the final states are defined as all the states except the sink. By the properties ($ii$)  of Lemma~\ref{lemme1} and ($vi$) of Lemma~\ref{lemme2}, this new automaton recognizes exactly the reduced decompositions of FC elements.
\end{proof}

Let just us remark however that the automaton that we purpose to recognize the reduced decompositions  of $FC$ elements will have in general much more states than the one introduced by Nadeau, as our states encode a lot of informations which are useless to determine if a word is a reduced decompositions of a FC element. 

\subsection{Rationality of $W^{CPC}$}\label{3.2}

If $(W,S)$ is a Coxeter system, we now focus on the nature of the generating series  $W^{CPC}(x)$ defined in the introduction.

For this, we will need the following result, due to Brink and Howlett (1993). We fix on $S$ a total order, which induces a lexicographical order on the set of words  $S^*$. We say that a reduced decomposition  ${\bf w}$ of an element $w \in W$ is \emph{minimal} if for any other reduced decomposition ${\bf w'}$ of $w$, ${\bf w'}$ is greater than ${\bf w}$ in the lexicographical order.

\begin{Theorem}[{\cite[Proposition 3.3 ]{BH}}]\label{anisimov}
Let $(W,S)$ be a Coxeter system, and fix a total order on $S$ and the corresponding lexicographical order on $S^*$. The set of minimal reduced decompositions of elements  $w\in W$ can be recognized by (explicit) finite state automaton.
\end{Theorem}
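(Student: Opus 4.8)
Since this is the Brink--Howlett theorem and is not original to the present paper, the plan is only to indicate the main ideas, the details being in \cite{BH}. The strategy is to realise the lexicographically minimal reduced decompositions as the language of an explicit automaton whose states are built from the \emph{elementary} (or \emph{small}) roots of $(W,S)$.

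First I would fix the standard reflection representation of $(W,S)$ on a real vector space with simple roots $(\alpha_s)_{s\in S}$, a $W$-invariant symmetric bilinear form, and root system $\Phi=\Phi^+\sqcup\Phi^-$; for $w\in W$ set $N(w)=\{\alpha\in\Phi^+:w\alpha\in\Phi^-\}$, so that $|N(w)|=\ell(w)$, $\ell(ws)>\ell(w)\iff\alpha_s\notin N(w)$, and $N(ws)=\{\alpha_s\}\sqcup sN(w)$ in that case. Reading a word ${\bf w}=s_1\cdots s_n$ from the left and recording the elements $v_i=s_1\cdots s_i$, the family $N(v_i)$ carries all the information needed to test step by step that ${\bf w}$ is reduced. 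I would then record the combinatorial criterion for lexicographic minimality: a reduced word $s_1\cdots s_n$ for $w$ is minimal if and only if for every $i$ and every generator $t<s_i$ the generator $t$ is not a left descent of the suffix $s_i\cdots s_n$. Rephrasing this as a condition that the machine can check incrementally, using only the order on $S$ and the data attached to the already--read prefix, is the first thing that has to be done with care.

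The point that makes everything finite is the following. Since $N(w)$ is infinite one cannot take it as a state; instead, following Brink and Howlett, equip $\Phi^+$ with the \emph{dominance order} --- $\beta$ dominates $\gamma$ when $w\beta\in\Phi^-\Rightarrow w\gamma\in\Phi^-$ for all $w\in W$ --- and call $\beta\in\Phi^+$ \emph{elementary} if it dominates no positive root but itself; let $E$ be the set of elementary roots. The crucial step, and the one I expect to be the main obstacle, is the finiteness theorem: \textbf{$E$ is finite}. This is proved by a rank-two reduction: analysing dominance inside the rank-two standard parabolic subsystems yields a uniform bound on the coefficients of an elementary root in the basis of simple roots, and since elementary roots lie in the discrete root lattice only finitely many vectors can meet such a bound. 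One also needs the companion fact that $E$ is almost stable under simple reflections: if $\beta\in E\setminus\{\alpha_s\}$ then $s\beta\in\Phi^+$ and either $s\beta\in E$ or $s\beta$ dominates $\alpha_s$.

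Granting this, I would build the automaton $\mathcal{B}=(S,Q,\delta,\varnothing,F)$ with $Q\subseteq 2^E$ (finite), initial state $\varnothing$, and, to a reduced prefix ${\bf v}$ representing $v$, the attached state $A_{\bf v}=N(v)\cap E$. From a state $A$, reading $s$: go to a sink if $\alpha_s\in A$ (equivalently if ${\bf v}s$ is not reduced) or if the minimality test of the previous paragraph fails at this step; otherwise put $\delta(A,s)=\{\alpha_s\}\cup\{\beta\in E:\beta\neq\alpha_s,\ s\beta\in E,\ s\beta\in A\}$. The companion fact shows this is well defined, i.e.\ $A_{{\bf v}s}=\delta(A_{\bf v},s)$: for $\beta\in E\setminus\{\alpha_s\}$, if $s\beta\in E$ then membership of $s\beta$ in $N(v)$ is read off $A_{\bf v}$, while if $s\beta$ dominates $\alpha_s$ then $s\beta\in N(v)$ would force $\alpha_s\in N(v)$, which is excluded when ${\bf v}s$ is reduced; so in all cases $A_{{\bf v}s}$ depends only on $(A_{\bf v},s)$. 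Likewise the right descents of $v$ are exactly the $t$ with $\alpha_t\in A_{\bf v}$, so the minimality test is a function of $(A_{\bf v},s)$ and the order on $S$. Taking $F$ to be all non-sink states, an induction on word length shows that $\mathcal{B}$ accepts precisely the lexicographically minimal reduced decompositions, which is the claim. (Combined with Theorem~\ref{kleene}, this supplies the rationality input used in Paragraph~\ref{3.2}.)
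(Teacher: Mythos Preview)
The paper does not prove this statement at all; it is quoted from Brink--Howlett \cite{BH} as a black box and used immediately in the proof of Theorem~\ref{thmintro}. You correctly identify this and offer a sketch of the Brink--Howlett argument, which is the right thing to do here. The parts of your sketch concerning the \emph{reducedness} automaton are essentially correct: the definition of elementary roots via dominance, the finiteness of $E$, the state $A_{\bf v}=N(v)\cap E$, and the transition $\delta(A,s)=\{\alpha_s\}\cup\{\beta\in E:s\beta\in E,\ s\beta\in A\}$ together with the companion fact all work as you describe.

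There is, however, a genuine gap in the treatment of lexicographic minimality. You state (correctly) the suffix criterion---$s_1\cdots s_n$ is lex-minimal iff no $t<s_i$ is a left descent of the suffix $s_i\cdots s_n$---and you flag that translating this into something checkable from the prefix ``has to be done with care.'' But your later sentence ``the right descents of $v$ are exactly the $t$ with $\alpha_t\in A_{\bf v}$, so the minimality test is a function of $(A_{\bf v},s)$'' does not deliver that translation: right descents of the prefix do not control left descents of suffixes. Concretely, in $S_4$ with $s_1<s_2<s_3$, the reduced word $s_3s_1s_2$ is not lex-minimal (the element has left descent $s_1<s_3$, so the minimal form is $s_1s_3s_2$), yet no step of your test rejects it: at the last step $v=s_3s_1$, $s=s_2$, and the only $t<s$ is $s_1$, which is not a right descent of $vs=s_3s_1s_2$; and at the first step $v=\varepsilon$ has no right descents at all. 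The actual Brink--Howlett automaton for lex-minimal words carries a different finite set of roots in its state---obtained by pushing the suffix criterion through the exchange condition, so that one tracks roots of the form $s_{j-1}\cdots s_i\,\alpha_t$ with $t<s_i$, intersected with $E$---and it is the finiteness of $E$ together with a separate stability argument for \emph{that} set which makes the automaton finite. Your state $N(v)\cap E$ alone is not enough.
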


Using this result, we can now prove Theorem~\ref{thmintro}.

\begin{proof}[Proof of Theorem~\ref{thmintro}]
Let $(W,S)$ be a Coxeter system, and  fix a total order on $S$ and the corresponding lexicographical order on $S^*$.

The language $L_1$ of minimal reduced decompositions of elements $w \in W$ can be recognized by finite state automaton, according to Theorem~\ref{anisimov}. 

The language $L_2$ of reduced decompositions of CFC elements is also recognizable by finite state automaton, according to Theorem~\ref{thmautomate}.
 
 As the languages recognizable by finite state automaton are stable under intersection   (see \cite[Theorem 3.3]{HOP}), the language $L_1 \cap L_2$ is recognizable by finite state automata.  This language contains exactly one reduced decomposition for each CFC element (the minimal one).  According to Theorem~\ref{kleene}, the generating series $W^{CPC}(q)$ is rational.
\end{proof}

We want to end this section by a remark.

\begin{Remark} As the automata designed in the proofs of Theorems~\ref{thmautomate} and \ref{thmintro} are explicit, we are able to implement them both. This gives an algorithmic method, given a Coxeter system $(W,S)$, to compute a regular expression for reduced expression of CFC elements, and a regular expression for minimal reduced expressions of CFC elements. Using this last expression, we can also directly compute the generating series $W^{CFC}(x)$. For example, for finite or affine Coxeter groups, we can re-obtain the results of \cite{PET4}.

\end{Remark}
\section{Open questions and acknowledgements}\label{section4}

Some questions remain open on the two automata defined respectively in the proofs of Theorems~\ref{thmintro} and \ref{thmautomate}. Recall that if $L$ is a regular language, there exists a unique automaton $A$ with a minimal number of states. It seems natural to wonder if our two automata are or not minimal. 

Another question concerns directly the number  of states of these automata. Some tests make for ``small'' Coxeter groups (in the sense that their Coxeter graphs are small) show that these numbers of states are very large. For example, for the Coxeter group $\widetilde{A}_3$ (with four generators), the number of states of the first automaton is 149. One may wonder if we can compute this number or the number of states of the minimal automaton.

\medskip

The author would like to thank P. Nadeau for valuable remarks on the links between rationality and automata theory.

\bibliographystyle{plain}
\bibliography{bibliographieang}

\end{document}